\newenvironment{enumb}{\begin{compactenum}[(a)]}{\end{compactenum}}
\newcommand{\oper}[1]{\operatorname{#1}}
\newcommand{\Gal}{\operatorname{Gal}}
\newcommand{\zet}{\mathbb{Z}}
\newcommand{\abs}[1]{\ensuremath{\lvert #1 \rvert}}
\newcommand{\Aut}{\operatorname{Aut}}
\newtheorem{thm}{Theorem}
\newtheorem{dfn}[thm]{Definition}
\newtheorem{rem}[thm]{Remark}
\newtheorem{lem}[thm]{Lemma}
\newtheorem{cor}[thm]{Corollary}
\newtheorem{exm}[thm]{Example}
\newcommand{\Ker}{\operatorname{Ker}}
\newcommand{\klmm}[1]{\ensuremath{\left( #1 \right)}}
\newcommand{\bares}[1]{ \overline{#1\vphantom{#1}} } 
\newcommand{\erz}[1]{\ensuremath{\langle #1 \rangle}}
\newcommand{\oh}{\mathcal{O}}
\newcommand{\pp}{\ensuremath{\mathfrak{p}}}
\newcommand{\cha}{\operatorname{char}}
\newcommand{\en}{\mathbb{N}}
\newcommand{\ord}{\operatorname{ord}}
\newcommand{\pe}{\mathbb{P}}
\newcommand{\rk}{\oper{rk}}
\newcommand{\ptrunc}[2]{\ensuremath{\trunc{\frac{#1}{#2}}}}
\newcommand{\prest}[2]{\ensuremath{\left\{{\frac{#1}{#2}}\right\}}}
\newcommand{\Inj}{\operatorname{Inj}}
\newcommand{\mm}{\mathscr{M}}
\newcommand{\norm}[1]{\oper{N}\klmm{ #1 } }
\newcommand{\trunc}[1]{\ensuremath{\lfloor #1 \rfloor}}
\newcommand{\prodd}{\prod\limits}
\newcommand{\summ}{\sum\limits}
\DeclarePairedDelimiter\ceil{\lceil}{\rceil}
\DeclarePairedDelimiter\floor{\lfloor}{\rfloor}
\begin{document}
\title[Conductor density of local function fields]{The conductor density of local function fields with abelian Galois group}

\author{J\"urgen Kl\"uners}
\address{University Paderborn, Department of Mathematics,
Warburger Str. 100, 33098 Paderborn, Germany} 

\email{klueners@math.uni-paderborn.de}

\author{Raphael M\"uller}
\address{University Paderborn, Department of Mathematics,
Warburger Str. 100, 33098 Paderborn, Germany} 

\email{rmuelle2@math.uni-paderborn.de}

\subjclass[2010]{Primary 11S20; Secondary 11S31, 11R45}

\begin{abstract}
We give an exact formula for the number of $G$-extensions of local function
  fields $\mathbb{F}_q((t))$ for finite abelian groups $G$ up to a conductor
  bound. As an application we give a lower bound for the corresponding counting
  problem by discriminant.
\end{abstract}
\maketitle

\section{Introduction}

Let $F= \mathbb{F}_q((t))$ be the Laurent series field over the finite field with $q=p^f$ elements.
Given a finite abelian group $G$,
we are interested in analysing the function
$$ Z(F,G;n) := \abs{ \{ E/F \text{ normal }: \Gal(E/F) \cong G  \ \text{and} \ \norm{\mathfrak F(E/F) } \leq q^n\} },$$ 
where $\norm{\mathfrak F (E/F)}$
is the  norm of the conductor.

Much progress has been made in analysing the asymptotic behaviour of
similar counting functions, typically weighting by discriminant as in
\cite{Wr} over global fields and 
over global function fields (see \cite{EV}). Both papers
leave the gap when we count abelian
$p$-extensions over global fields in characteristic $p$. For this, there are results on counting global function fields by conductor and local function fields by discriminant (\cite{La1}, \cite{La2}).  In the number field situation M\"aki \cite{Mae} is counting by conductor.

For the local situation (characteristic 0 and $p$), there is Serre's Mass Formula (see \cite[Th. 2]{Se}) which gives a 
strong summation formula over all totally ramified extensions over a fixed local field weighted by discriminant and the size of its automorphism group.
Over $p$-adic fields, there are only finitely many extensions of a given degree. For a  $p$-group $G$ there are infinitely many extensions with that group (\cite[Th. 2.4.3]{Le}, \cite[Th. 7.5.10]{NSW}
) and we can show the following theorem.

\begin{thm}\label{mainthm} 
 Let $G$ be a finite abelian $p$-group with exponent $\exp(G) = p^{e} $ and $ \alpha_p(G) := \sum\limits_{k=1}^{e} \frac{ p-1}{p^k} \rk_{p^k}(G),$ where $\rk_{p^k}(G) = \dim_{\mathbb{F}_p}(G^{p^{k-1}} / G^{p^k}).$  Then there is a $p^e$-periodic function $\delta_G \colon \zet\rightarrow [-\alpha_p(G),0]$ 
 and a function $\varepsilon(G,q,n)$ 
 such that 
$$Z(F,G;n) =   \frac{  \abs{G}}{\abs{\Aut(G)}} q^{n \alpha_p(G)} q^{\delta_G(n)} \varepsilon(G,q,n) \mbox{ and }$$
  \begin{align*}Z(F,G;n) \sim \frac{\abs{G}}{\abs{\Aut(G)}} q^{n \alpha_p(G)} q^{\delta_G(n)} 
.\end{align*}
\end{thm}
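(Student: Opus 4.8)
The plan is to translate the counting problem into local class field theory and then reduce to a purely combinatorial count on the unit filtration of $F$. Artin reciprocity identifies continuous homomorphisms $F^\times \to G$ with homomorphisms $\Gal(F^{\mathrm{ab}}/F) \to G$, and since a field $E$ with $\Gal(E/F)\cong G$ is the fixed field of the kernel of such a surjection while each kernel arises from exactly $\abs{\Aut(G)}$ surjections, I would first record
\[
  Z(F,G;n) = \frac{1}{\abs{\Aut(G)}}\,\#\{\varphi\colon F^\times\to G \text{ continuous surjective with } c(\varphi)\le n\},
\]
where $c(\varphi)$ denotes the conductor exponent. Decomposing $F^\times\cong\zet\times\mathbb{F}_q^\times\times U^{(1)}$ with $U^{(m)}=1+\pp^m$, the tame factor $\mathbb{F}_q^\times$ of order $q-1$ is killed because $G$ is a $p$-group, the valuation factor $\zet$ is unramified and contributes the outer factor $\abs{G}=\#\oper{Hom}(\zet,G)$, and the conductor is governed solely by $\varphi|_{U^{(1)}}$; thus $c(\varphi)\le n$ is equivalent to $\varphi|_{U^{(1)}}$ factoring through $U^{(1)}/U^{(n)}$.

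Second, I would use the structure of $U^{(1)}/U^{(n)}$ in characteristic $p$. The identity $(1+at^i)^p=1+a^pt^{ip}$ shows the $p$-power map multiplies the filtration level by $p$, giving
\[
  U^{(1)}/U^{(n)} \;\cong\; \bigoplus_{\substack{1\le i<n\\ p\nmid i}}\klmm{\zet/p^{k_i(n)}}^{f},
  \qquad k_i(n)=\ceil{\log_p(n/i)}.
\]
Writing $G\cong\bigoplus_{j}\zet/p^{e_j}$ so that $\#G[p^k]=p^{\sum_j\min(k,e_j)}$, this yields the exact identity $\log_q\#\oper{Hom}(U^{(1)}/U^{(n)},G)=S(n)$ with $S(n)=\summ_{\substack{1\le i<n,\,p\nmid i}}\summ_j\min(k_i(n),e_j)$. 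Using $\min(k_i(n),e_j)=\summ_{m=1}^{e_j}[\,i<n p^{1-m}\,]$ I would rewrite this as $S(n)=\summ_j\summ_{m=1}^{e_j}A(n p^{1-m})$, where $A(x)=\#\{1\le i<x:p\nmid i\}$. Splitting $A(x)=x(1-1/p)+\eta(x)$ and summing the resulting geometric series gives the leading term $\summ_j n(1-p^{-e_j})$; that $\summ_j(1-p^{-e_j})=\alpha_p(G)$ follows from $\rk_{p^k}(G)=\#\{j:e_j\ge k\}$ together with $\summ_{k=1}^{e_j}\frac{p-1}{p^k}=1-p^{-e_j}$, so the main term of $\log_q$ is exactly $n\,\alpha_p(G)$.

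Third, I would isolate the periodic remainder. Setting $\delta_G(n):=S(n)-n\,\alpha_p(G)=\summ_j\summ_{m=1}^{e_j}\eta(n p^{1-m})$, the key point is that $\eta$ is invariant under integer shifts of its argument by $p$ (any window of length $p$ contains exactly $p-1$ integers prime to $p$), so each term $\eta(np^{1-m})$ is a $p^e$-periodic function of $n$ because $m\le e_j\le e$; hence $\delta_G$ is exactly $p^e$-periodic. The bounds $\delta_G(n)\in[-\alpha_p(G),0]$ I would obtain by comparing the decreasing step sum $S(n)$ against the continuous quantity $n\,\alpha_p(G)$, the upper bound $\delta_G\le0$ coming from the sum-versus-integral estimate and the lower bound from its complement; the extremes are attained at $n\equiv0$ and $n\equiv1\pmod{p^e}$.

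Finally, surjectivity is restored by M\"obius inversion over the subgroup lattice. From $\#\oper{Hom}_{\le n}(F^\times,G)=\summ_{H\le G}\#\oper{Surj}_{\le n}(F^\times,H)$ one gets
\[
  Z(F,G;n)=\frac{1}{\abs{\Aut(G)}}\summ_{H\le G}\mu(H,G)\,\abs{H}\,q^{\,n\alpha_p(H)+\delta_H(n)},
\]
whose $H=G$ term is precisely $\frac{\abs{G}}{\abs{\Aut(G)}}q^{n\alpha_p(G)}q^{\delta_G(n)}$; defining $\varepsilon(G,q,n)$ as the quotient of the full sum by this term gives the claimed exact formula. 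Because $\rk_{p^k}(H)\le\rk_{p^k}(G)$ for every subgroup $H\le G$, with equality for all $k$ only when $H=G$, every proper $H<G$ satisfies $\alpha_p(H)<\alpha_p(G)$; since the $\delta_H$ are bounded, each lower-stratum term is $o(q^{n\alpha_p(G)})$ and therefore $\varepsilon(G,q,n)\to1$, yielding the asymptotic. The main obstacle is the third step: establishing the exact $p^e$-periodicity of $\delta_G$ and the sharp bounds $[-\alpha_p(G),0]$, where the ceiling function $\ceil{\log_p(n/i)}$, the coprimality condition $p\nmid i$, and the dependence on $n\bmod p^e$ must be reconciled exactly rather than merely asymptotically.
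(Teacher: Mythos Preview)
Your argument is correct, and the identification of the main term, the $p^e$-periodicity of $\delta_G$, and the bounds $\delta_G(n)\in[-\alpha_p(G),0]$ all go through (for the bounds, note that your $\eta$-sum telescopes: $\summ_{m=1}^{e_j}\eta(np^{1-m}) = n/p^{e_j}-\lceil n/p^{e_j}\rceil\in[-(1-p^{-e_j}),0]$, which immediately gives the sharp range when summed over $j$). Your route, however, differs from the paper's in two respects.

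First, the paper \emph{dualises} rather than working with surjections: it counts subgroups of $X_n=\zet/p^e\zet\times\mathbb{F}_q^\times\times U_n$ isomorphic to $G$, and writes this as $\abs{\Inj(G,X_n)}/\abs{\Aut(G)}$. Second, instead of computing $\abs{\oper{Hom}}$ exactly and then recovering surjectivity by M\"obius inversion, the paper appeals to Delsarte's explicit formula
\[
\abs{\Inj(G,A)}=\prodd_{k=1}^{e} p^{r_k(A)r_k(G)}\prodd_{j=0}^{\tilde r_k(G)-1}\klmm{1-\frac{p^{r_{k+1}(G)+j}}{p^{r_k(A)}}},
\]
which already builds in the injectivity condition. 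Plugging in the ranks $r_k(X_n)=1+f\bigl(\lfloor(n-1)/p^{k-1}\rfloor-\lfloor(n-1)/p^k\rfloor\bigr)$ then gives $\varepsilon(G,q,n)$ as an explicit finite product of factors $(1-\text{small})$, from which $\varepsilon\to1$ is immediate.

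The main-term calculation is really the same in both approaches, since $\abs{\oper{Hom}(U_n,G)}=p^{\sum_k r_k(G)r_k(U_n)}$ and the extra factor $\abs{G}$ you attribute to $\oper{Hom}(\zet,G)$ is exactly the ``$+1$'' in $r_k(X_n)=r_k(U_n)+1$. What the two routes buy is different: the paper obtains a closed product for $\varepsilon$, which is more transparent for explicit estimates and is re-used later; your M\"obius-inversion approach avoids invoking Delsarte's formula and stays closer to first principles, but leaves $\varepsilon$ as an alternating sum over the subgroup lattice rather than a product. Both yield the theorem as stated, since it only asserts the \emph{existence} of such an $\varepsilon$ with $\varepsilon\to1$.
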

Note, that the constants $\delta_G(n)$ and $ \varepsilon(G,q,n)$ are defined in \eqref{def_delta} and \eqref{eq:eps} and $\lim_{n\rightarrow\infty} \varepsilon(G,q,n)=1$. Furthermore we prove that the constants $\alpha_p(G)$ and $\delta_G(n)$ are
additive with respect to direct products, see Remark \ref{delta-G}.

The function $\delta_G$ shows an oscillation depending on the residue of $n$ modulo $\exp(G)$. 
Hence in order to get a convergence result of the form $$ Z(F,G;n) \sim c(F,G) \cdot q^{n \alpha_p(G)},$$  we need to restrict to an arithmetic progression modulo $\exp(G)$. We remark that
there is an oscillation even for the group $G=C_p$, the cyclic group of order  $p$.

For a general abelian finite group $G$ denote by $G_{p'}$ the coprime to $p$-part of $G$. Then $G = G_p \times G_{p'}$ and 
the existence of a solution to the inverse Galois problem only depends on $G_{p'}$, while the  
$p$-part of $G$ determines the asymptotical growth: If $|G|$ is coprime to $p$, there are at most
finitely many extensions with Galois group $G$.
In Theorem~\ref{thm18}
we give an exact formula for arbitrary finite abelian groups.
The main work lies within $p$-part and Theorem \ref{mainthm}. For $\ell \ne p$, the $\ell$-rank of $G$ is bounded by $2$, and only the prime divisors of $q-1$ contribute a possibly non-trivial factor.



Finally, we prove in Theorem \ref{thm:lower} a non-trivial lower bound for the
distribution of abelian local function field extensions counted by
discriminant. We remark that our lower bound coincides with 
 the asymptotic exponent given in Satz~2.1 in \cite{La1}.

\section{Higher Unit Groups}

Let $\mathbb{F}_q $ be a finite field with $q=p^f$ elements and $F=\mathbb{F}_q((t))$ be the Laurent series ring over $\mathbb{F}_q$. 
Let $\oh_F = \mathbb{F}_q[[t]]$ be the local ring with maximal ideal $\pp=t\oh_F$.
By the main theorem of local class field theory, we get a one-to-one
correspondence of abelian extensions $E/F$
and norm groups $U:= \oper{N}_{E/F}(E^\times )$ in $F^\times$. \\
The conductor exponent $c(U)$ of an open subgroup  $U \leq F^\times$ of finite index is the minimal natural number $n$ such
that $1+\pp^n \leq U$. The conductor of an abelian extension
$E/F$ corresponding to $U$ is $$ \mathfrak F(E/F) = \pp^{c(U)}.$$

\begin{thm}
The mapping 
$
E \mapsto \oper{N}_{E/F}(E^\times) $ defines a bijection between  finite abelian extensions of $F$ and open subgroups of $F^\times$ of finite index.

Moreover the Galois group $\Gal(E/F) $ is  isomorphic to the quotient group $F^\times /U$. 
\end{thm}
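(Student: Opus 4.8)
The plan is to recall that the statement is precisely the combination of the two central theorems of local class field theory for the local field $F=\mathbb{F}_q((t))$: the \emph{reciprocity isomorphism}, which identifies $\Gal(E/F)$ with $F^\times/\oper{N}_{E/F}(E^\times)$ for every finite abelian $E/F$, and the \emph{existence theorem}, which asserts that every open subgroup of finite index in $F^\times$ arises as a norm group. Both hold verbatim for equal-characteristic local fields, so it suffices to assemble these two inputs and then check that together they yield the stated bijection.

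First I would construct the local Artin (reciprocity) map. The cohomological route is to compute the Brauer group: since $F$ is complete with finite residue field $\mathbb{F}_q$, the invariant map gives a canonical isomorphism $\oper{Br}(F)\cong \mathbb{Q}/\zet$, and for a finite Galois extension $E/F$ of degree $d$ the relative piece is $H^2(\Gal(E/F),E^\times)\cong \tfrac{1}{d}\zet/\zet$. Cup product with the resulting fundamental class induces an isomorphism of Tate cohomology $\hat H^{-2}(\Gal(E/F),\zet)\xrightarrow{\sim}\hat H^0(\Gal(E/F),E^\times)$, that is $\Gal(E/F)^{\mathrm{ab}}\xrightarrow{\sim} F^\times/\oper{N}_{E/F}(E^\times)$. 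When $\Gal(E/F)$ is abelian this is the claimed isomorphism $\Gal(E/F)\cong F^\times/U$.

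Next I would establish the lattice of norm groups, all taken down to $F$. The norm functor $E\mapsto \oper{N}_{E/F}(E^\times)$ is inclusion-reversing and satisfies $\oper{N}(E_1E_2)=\oper{N}(E_1)\cap\oper{N}(E_2)$ together with $\oper{N}(E_1\cap E_2)=\oper{N}(E_1)\oper{N}(E_2)$; combined with the reciprocity isomorphism this shows the map is injective and that each norm group is open of finite index. Injectivity then reduces the bijectivity claim to the existence theorem, i.e.\ to showing that \emph{every} open finite-index $U\le F^\times$ equals some $\oper{N}_{E/F}(E^\times)$. Because such a $U$ contains $(F^\times)^{n}$ and $1+\pp^{m}$ for suitable $m,n$, it is enough to exhibit, for these two families, abelian extensions whose norm groups lie inside $U$; the lattice property then forces $U$ itself to be a norm group.

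The main obstacle is the existence theorem in the \emph{wild} (equal-characteristic $p$) case, where Kummer theory is unavailable because $\mu_p\subset F$ is trivial. Here I would either invoke Lubin--Tate formal group laws attached to the uniformizer $t$, which directly construct the totally ramified abelian extensions of $F$ and realize the reciprocity map on units, or use Artin--Schreier--Witt theory to produce sufficiently many abelian $p$-extensions to cover all subgroups containing a fixed $1+\pp^{m}$. The tame and unramified parts are handled by Kummer extensions for primes dividing $q-1$ and by the unramified $\zet$-tower; assembling these pieces gives enough norm groups, completing the existence theorem and hence the bijection, with the quotient identification supplied by the reciprocity isomorphism above.
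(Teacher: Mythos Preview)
Your outline is a correct sketch of the standard cohomological route to local class field theory (Brauer group $\cong\mathbb{Q}/\zet$, Tate's cup-product isomorphism giving reciprocity, then the existence theorem via Lubin--Tate or Artin--Schreier--Witt in the wild equal-characteristic part). The paper, however, does not prove this statement at all: its entire proof is the single-line citation ``For a proof, see \cite[Theorem~6.2, p.~154]{FV}'' (Fesenko--Vostokov). So your approach differs maximally from the paper's in that you actually supply the argument the paper outsources to a textbook; what you gain is self-containment, while the paper's citation is the appropriate choice for a foundational result that is not original to the work.

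One small caution on your sketch: the identity $\oper{N}(E_1\cap E_2)=\oper{N}(E_1)\oper{N}(E_2)$ is typically established only \emph{after} the existence theorem, so when you argue injectivity of $E\mapsto\oper{N}_{E/F}(E^\times)$ you should rely only on the compositum identity $\oper{N}(E_1E_2)=\oper{N}(E_1)\cap\oper{N}(E_2)$ together with functoriality of the reciprocity map, both of which follow from the reciprocity isomorphism alone.
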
 
{For a proof, see \cite[Theorem 6.2., p. 154]{FV}.}

Let $G$ be a finite abelian group of exponent $\operatorname{exp}(G)$. 
Recall
$ F^\times \cong \zet \times \mathbb{F}_q^\times \times (1+\pp),$ see Hasse (\cite[Ch. 15]{Ha}).
We define 
$$ U_n := (1+\pp )/ (1+ \pp^{n}) \quad \text{and} \quad X_n  := \zet/ \operatorname{exp}(G)\zet  \times \mathbb{F}_q^\times \times  U_n.$$ 
By class field theory
the counting problem reduces to count the number of open subgroups $U\leq F^\times$ with $F^\times/U$ isomorphic to $G$.
The conductor bound $\norm{\mathfrak F (E/F)}  \leq q^n$ is equivalent to $1+\pp^{n} \leq U$.
Moreover, $F^\times /U\cong G$ implies that $\operatorname{exp}(G)$ annihilates $F^\times /U$.
So for our counting problem it is sufficient 
to consider the subgroups of $F^\times$ containing $$ \operatorname{exp}(G) \zet \times 1 \times \klmm{1+\pp^{n}}$$ 
 which correspond to the subgroups of $X_n$.

By dualising, the number of subgroups of $F^\times$ with quotient isomorphic to $G$ is exactly 
the number of  subgroups of $X_n$ isomorphic to $G$.  Thus we reduce our counting problem 
to counting subgroups in certain finite abelian groups.


In establishing our desired formula, we first study higher unit groups, 
and consider formulas on subgroups of finite abelian groups depending on the $p^k$-ranks of the groups. 

\begin{dfn}
Let $G$ be a finite abelian group and $\ell \in \pe$. Then $$ \oper{rk}_{\ell^k}(G):= \dim_{\mathbb{F}_\ell} ( G^{\ell^{k-1}}/G^{\ell^k})
$$ is the $\ell^k$-rank of $G$. 
If $\ell = p= \cha(F)$,
we use the shorthand notation \[r_k(G) := \oper{rk}_{p^k}(G) \ 
\text{ and set } \  \tilde{r}_k(G) := r_k(G) - r_{k+1}(G) 
.\]
\end{dfn}

Let now $G$ be a  finite abelian $p$-group.
 A sequence of elements $(g_1,\ldots, g_r)$ is called a \emph{group-basis} of $G$ if each element $g\in G$ has a unique
representation $$ g = g_1^{i_1} \cdots g_r^{i_r}, \quad  0\leq i_j < \ord(g_j). $$

If $(g_1,\ldots, g_r)$ is a group-basis of $G$, then $\tilde{r}_k(G)$ is the number of generators with 
$\ord(g_j) = p^k$, i.e., it is the number of cyclic factors of $G$ isomorphic to $C_{p^k}$. 
\begin{lem}
\label{la:Xm-ranks}
Let  $(v_1,\ldots, v_f)$  be an $\mathbb{F}_p$-basis of $\mathbb{F}_q$. Then the following holds:
\begin{enumb}
\item $1+\pp$ has a $\zet_p$-basis 
$$ \{ 1+ v_i t^{k} : k\in \en, p\nmid k, 1 \leq i \leq f\} \quad \text{and} $$
 $$ \{ \bares{1+ v_i t^{k} }: 1 \leq i \leq f, k\leq n-1, p\nmid k \} \quad \text{ is a group-basis  of }U_n.$$
\item For each $v \in \mathbb{F}_q^\times$ and $i \geq 1$ we have in $U_n$ $$ \ord(\bares{1+vt^i} ) = p^{\ceil{\log_p({n/i})}}.$$
\item For all $j \in \en$, $U_n[p^j]$ is generated by $$ \{ 1+v_it^k : 1\leq i \leq f, p\nmid k, \ceil{n/p^j}\leq k \leq n-1 \}.$$
\item For all $k\in \en$ we have $$r_k(U_n) = f\klmm{ \trunc{\frac{n-1}{p^{k-1}}} - \trunc{\frac{n-1}{p^k}}} \quad \text{ and } \quad r_i(X_n) = r_i(U_n)+1 \ \text{for } i=1,\ldots,e.$$ 
\end{enumb}
\end{lem}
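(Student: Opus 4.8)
The common engine for all four parts is the characteristic-$p$ identity $(1+a)^{p^s}=1+a^{p^s}$, valid in $\oh_F=\mathbb{F}_q[[t]]$. It shows at once that $1+\pp$ is torsion-free (if $(1+a)^{p^s}=1$ then $a^{p^s}=0$, so $a=0$), that raising to the $p^s$-th power multiplies valuations, i.e. $v\klmm{(1+a)^{p^s}-1}=p^s\,v(a)$, and more precisely that for $c\in\zet_p$ with $v_p(c)=s$ one has $v\klmm{(1+v_it^k)^c-1}=k\,p^s$, since $(1+v_it^k)^{p^s}=1+v_i^{p^s}t^{kp^s}$ and raising to a $p$-adic unit power preserves the leading term. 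I will also use that Frobenius $x\mapsto x^p$ is an $\mathbb{F}_p$-linear automorphism of $\mathbb{F}_q$, so that $\{v_1^{p^s},\ldots,v_f^{p^s}\}$ is again an $\mathbb{F}_p$-basis for every $s\geq 0$, together with the filtration isomorphisms $(1+\pp^m)/(1+\pp^{m+1})\cong(\mathbb{F}_q,+)$, $1+at^m\mapsto a$. Part (b) is then immediate: the order of $\bares{1+vt^i}$ in $U_n$ is the least $p^s$ with $(1+vt^i)^{p^s}\in 1+\pp^n$, and as the left-hand side has valuation $ip^s$ this is the least $s$ with $ip^s\geq n$, namely $s=\ceil{\log_p(n/i)}$.

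For part (a) the plan is to show that the continuous homomorphism sending the $(k,i)$-th coordinate of $\prod_{p\nmid k,\,1\leq i\leq f}\zet_p$ to $1+v_it^k$ is an isomorphism of topological $\zet_p$-modules. Surjectivity is a successive-approximation argument along the filtration: writing $m=kp^s$ with $p\nmid k$, the image of a given principal unit in $(1+\pp^m)/(1+\pp^{m+1})\cong\mathbb{F}_q$ is cancelled using the factors $(1+v_it^k)^{p^s}$, whose leading coefficients $v_i^{p^s}$ form a basis; iterating over increasing $m$ fixes the $s$-th $p$-adic digit of each exponent and yields a convergent product. Injectivity is the matching leading-term argument: in a nontrivial relation $\prod(1+v_it^k)^{c_{k,i}}=1$, the least valuation $m=kp^s$ at which some factor contributes carries leading coefficient $\sum_i\mu_iv_i^{p^s}$ with the $\mu_i\in\mathbb{F}_p$ not all zero, which is nonzero, a contradiction. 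Since $(1+v_it^k)^c\in 1+\pp^n$ exactly when $v_p(c)\geq\ceil{\log_p(n/k)}$, this $\zet_p$-basis descends diagonally to $U_n=(1+\pp)/(1+\pp^n)$: the surviving generators are those with $k\leq n-1$, and their orders are precisely those found in (b), which is the asserted group-basis.

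For part (c), an element $u=1+a$ of $U_n$ satisfies $u^{p^j}=1$ iff $v(a^{p^j})=p^j v(a)\geq n$, i.e. iff $v(a)\geq\ceil{n/p^j}$; hence $U_n[p^j]=(1+\pp^{\ceil{n/p^j}})/(1+\pp^n)$, and since $1+\pp^M$ is topologically generated by the principal units $1+v_it^k$ of valuation $k\geq M$, taking $M=\ceil{n/p^j}$ produces the generators listed in (c). Part (d) follows formally: for any finite abelian $p$-group one has $r_k(G)=\log_p\abs{G[p^k]}-\log_p\abs{G[p^{k-1}]}$, both sides counting the cyclic factors of order at least $p^k$, and by (c) $\abs{U_n[p^k]}=q^{\,n-\ceil{n/p^k}}$, so that $r_k(U_n)=f\klmm{\ceil{n/p^{k-1}}-\ceil{n/p^k}}$; rewriting via $\ceil{n/p^s}=\trunc{(n-1)/p^s}+1$ gives the stated floor formula. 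Finally $r_i$ is additive on direct products, the factor $\mathbb{F}_q^\times$ has order $q-1$ coprime to $p$ and so contributes $0$, while $\zet/\operatorname{exp}(G)\zet=\zet/p^e\zet$ is cyclic of order $p^e$ and contributes $1$ for $1\leq i\leq e$; hence $r_i(X_n)=r_i(U_n)+1$ in that range.

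The one genuinely delicate step is the leading-term bookkeeping in part (a): one must track how the $p$-adic digits of the exponents interact across the filtration levels $m=kp^s$ and establish convergence and uniqueness of the product representation simultaneously. Everything else reduces mechanically to the characteristic-$p$ power identity and the resulting exact valuation $v\klmm{(1+v_it^k)^c-1}=k\,p^{v_p(c)}$.
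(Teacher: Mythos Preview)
Your proof is correct and follows essentially the same approach as the paper: both rest on the characteristic-$p$ identity $(1+a)^{p^s}=1+a^{p^s}$ to compute orders in $U_n$ and then read off the ranks from the group-basis. The only cosmetic differences are that you supply a self-contained argument for (a) where the paper simply cites Hasse, and in (d) you pass through the sizes $\lvert U_n[p^k]\rvert=q^{\,n-\lceil n/p^k\rceil}$ and the identity $\lceil n/p^s\rceil=\lfloor (n-1)/p^s\rfloor+1$, whereas the paper counts directly the basis elements $\overline{1+v_jt^i}$ with $\mathrm{ord}\geq p^k$ via $ip^{k-1}\leq n-1$.
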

\begin{proof}
\begin{enumb}
\item 
\cite[p. 227]{Ha}. 
\item  $U_n$ is a $p$-group of order $q^{n-1}$ as $\erz{1+\pp^i}/\erz{1+\pp^{i+1}} \cong \mathbb{F}_q$ for all $i\geq 1$. 
Let $i\leq n$ and put $\alpha := \bares{1+vt^i } \in \erz{1+\pp}/ \erz{1+\pp^{n}}$ with $v \in \mathbb{F}_q^\times$ and 
$k\in \en$.
Then:
\begin{align*}
\bares{1+vt^i}^{p^k} = 1  & 
 \iff v^{p^k}t^{i p^{k}} \in \pp^{n}  \iff  i p^k \geq n \\
& \iff p^k \geq \frac{n}{i} 
 \overset{k \in \en}{\iff } k \geq \ceil{ \log_p(\frac{n}{i})}.
\end{align*}
\item This is (a) and (b) with $\ceil{\log_p(n/k)} \leq j \iff n/k \leq p^j \iff k \geq n/p^j$.
\item By (a), $\mathscr B = \{ \bares{1+v_jt^i}  : 1\leq i < n \text{ and }p \nmid i\}$ is a group-basis of $X_n$.
Then  $$ r_k(X_n) = \abs{ \{ g \in \mathscr B : \ord(g) \geq p^k \} }.$$
By (b) we have $\ord(\bares{1+v_jt^i}) \geq p^k \iff ip^{k-1} <n \iff ip^{k-1} \leq n-1$, hence 
$$ r_{k}(U_n) = f \cdot \abs{ \{i : i\leq \ptrunc{n-1}{p^{k-1}}, p \nmid i\} }  =  f(\ptrunc{n-1}{p^{k-1}} - \ptrunc{n-1}{p^{k}}).$$

Note that $r_k(X_n) = r_k(U_n)+1$ since $p \nmid \abs{\mathbb{F}_q^\times}$.   \qedhere
\end{enumb} 
\end{proof}

\section{Monomorphisms and Automorphisms of finite abelian groups}

 For $G$ and $ A$ finite abelian groups let $G_p = \{ g \in G : \ord(g) = p^a\ \text{for some } a\in \en \}$   be the $p$-Sylow subgroup of $G$ and let $G_{p^\prime}$ be the coprime to $p$ part of $G$. We define 
 $$ \operatorname{Inj}(G,A) := \{ \phi\ \colon\ G \rightarrow A \ \text{monomorphism}\}, \quad 
 \alpha_G(A) := \abs{\{ U\leq A : U\cong G \} }.$$
 We immediately get
 \begin{equation}\label{lem:injap}
 \alpha_G(A) \cdot \abs{\Aut(G)}= \abs{\Inj(G, A)}.
\end{equation}
  We start with the following reduction to $p$-groups.

 \begin{lem}\label{lem:pstrich} 
 
 $\abs{\oper{Inj}(G,A)  } = \prod\limits_{\ell\in \pe} \abs{\oper{\Inj(G_\ell,A_\ell)}}$ and $\alpha_G(A) = \alpha_{G_p}(A_p) \cdot \alpha_{G_{p^\prime}}(A_{p^\prime}).$
 \end{lem}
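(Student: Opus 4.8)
The plan is to exploit the canonical primary decomposition $G = \prodd_{\ell \in \pe} G_\ell$ of a finite abelian group into its $\ell$-Sylow subgroups, only finitely many of which are nontrivial, and to observe that this decomposition is respected by every homomorphism.

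First I would note that for any $\phi \in \oper{Hom}(G,A)$ and any prime $\ell$, an element of $\ell$-power order is sent to an element of $\ell$-power order, so $\phi(G_\ell) \subseteq A_\ell$. Restriction therefore defines a map $\oper{Hom}(G,A) \to \prodd_{\ell} \oper{Hom}(G_\ell, A_\ell)$, $\phi \mapsto (\phi|_{G_\ell})_\ell$. Using the internal direct-product structure of both $G$ and $A$, any tuple $(\phi_\ell)_\ell$ assembles uniquely into a homomorphism restricting to $\phi_\ell$ on each factor, so this restriction map is a bijection.

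Next I would check that injectivity is componentwise. Since the images $\phi(G_\ell)$ lie in the internally direct factors $A_\ell$ of $A$, the kernel splits as $\Ker(\phi) = \prodd_\ell \Ker(\phi|_{G_\ell})$, which is trivial precisely when each $\Ker(\phi|_{G_\ell})$ is. Hence the restriction bijection restricts to a bijection $\oper{Inj}(G,A) \to \prodd_\ell \oper{Inj}(G_\ell, A_\ell)$, proving the first identity (the product being finite, as $G_\ell$ is trivial for $\ell \nmid \abs{G}$).

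Finally, for the statement on $\alpha_G(A)$, I would apply the first identity together with the analogous factorization $\Aut(G) \cong \prodd_\ell \Aut(G_\ell)$, which holds because automorphisms preserve the primary decomposition. Combining these with \eqref{lem:injap} gives
\[
\alpha_G(A) = \frac{\abs{\oper{Inj}(G,A)}}{\abs{\Aut(G)}} = \prodd_{\ell} \frac{\abs{\oper{Inj}(G_\ell,A_\ell)}}{\abs{\Aut(G_\ell)}} = \prodd_\ell \alpha_{G_\ell}(A_\ell),
\]
and regrouping the factor $\ell = p$ against the remaining primes, whose product equals $\alpha_{G_{p'}}(A_{p'})$ by the same identity applied to the $p'$-parts, yields $\alpha_G(A) = \alpha_{G_p}(A_p)\cdot \alpha_{G_{p'}}(A_{p'})$. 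Everything here is routine given the primary decomposition; the only point requiring a moment's care is the componentwise characterization of injectivity, i.e.\ the identification $\Ker(\phi) = \prodd_\ell \Ker(\phi|_{G_\ell})$.
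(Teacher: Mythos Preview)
Your argument is correct and follows the same approach as the paper, which condenses the entire proof into the single observation that monomorphisms preserve the order of elements. You have simply spelled out the routine consequences of that observation in detail.
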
	
 \begin{proof}
 	Monomorphisms need to preserve the order of elements.
 \end{proof}

%
Thus it is sufficient to consider finite abelian $p$-groups. In the following $G$ and $A$ will be finite abelian $p$-groups with $\exp(G) = p^e$.
As in \cite{La1} we define 
\begin{equation}
\label{eq:fG-1} 
f_G(t_1,\ldots, t_e)  := \prodd_{k=1}^e t_k^{r_{k+1}(G)} \prodd_{j= r_{k+1}(G)}^{r_k(G)-1} (t_k - p^j) .
\end{equation}


\begin{lem}\label{la:injaut-formula}
 Let $t(A) := (p^{r_1(A)}, \ldots, p^{r_e(A)})$ for an abelian $p$-group $A$. Then:
 \begin{enumb}
   \item $\abs{ \Inj(G,A)} = f_G(t(A)) = \prodd_{k=1}^e p^{r_k(A) r_{k+1}(G)}\prodd_{j= 0}^{\tilde r_k(G) -1}  (p^{r_k(A)} -p^{r_{k+1}(G) +j}) $,
  \item $\abs{\Aut(G)} = \abs{\Inj(G,G)} = f_G(t(G))$.
 \end{enumb}
\end{lem}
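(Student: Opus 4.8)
The plan is to deduce part (b) from part (a) and concentrate on (a). For (b), an injective endomorphism of a finite group is automatically surjective, so $\Inj(G,G)=\Aut(G)$, and $\abs{\Aut(G)}=\abs{\Inj(G,G)}=f_G(t(G))$ is then the case $A=G$ of (a). The first displayed equality in (a) is only a rewriting of the definition \eqref{eq:fG-1}: substituting $t_k=p^{r_k(A)}$ gives $t_k^{r_{k+1}(G)}=p^{r_k(A)r_{k+1}(G)}$, and reindexing the inner product by $j\mapsto r_{k+1}(G)+j$ turns $\prod_{j=r_{k+1}(G)}^{r_k(G)-1}(t_k-p^j)$ into $\prod_{j=0}^{\tilde r_k(G)-1}(p^{r_k(A)}-p^{r_{k+1}(G)+j})$. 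So the whole content is the identity $\abs{\Inj(G,A)}=f_G(t(A))$.

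My main strategy is to build a monomorphism by prescribing the images of a group-basis one element at a time, processing the generators from largest order to smallest, and to multiply the number of admissible choices at each step. Fix a group-basis of $G$ with $\tilde r_k(G)$ elements of order $p^k$, ordered so that the orders are non-increasing. Suppose a monomorphism $\psi$ has already been defined on the subgroup $G'$ spanned by the generators treated so far, with image $B:=\psi(G')$, and that the next generator $g$ has order $p^k$; write $a:=\phi(g)$. The extension is a well-defined homomorphism iff $a\in A[p^k]$, and an elementary kernel computation shows it is injective iff $\ord(a)=p^k$ \emph{and} $\langle a\rangle\cap B=0$. Since every nontrivial subgroup of a $p$-group meets the socle, $\langle a\rangle\cap B=0$ is equivalent to $\langle a\rangle[p]\cap B=0$, i.e.\ to $p^{k-1}a\notin B$, and as $p^{k-1}a\in A[p]$ this is the same as $p^{k-1}a\notin B[p]$.

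The hard part will be to show that the number of admissible $a$ is independent of the particular $\psi$, for otherwise the steps would not simply multiply; an explicit small example shows this independence fails if the generators are processed in increasing order, so the decreasing order is essential. Here all generators already placed have order $\ge p^k$, so for such a generator $h$ of order $p^m\ge p^k$ its socle image satisfies $\psi(p^{m-1}h)=p^{k-1}\bigl(p^{m-k}\psi(h)\bigr)\in A^{p^{k-1}}$. Hence $B[p]=\psi(G'[p])\subseteq A^{p^{k-1}}[p]$, which is the socle of $A^{p^{k-1}}$ and has dimension $r_k(A)$, while $\dim_{\mathbb{F}_p}B[p]=r_1(G')=r_{k+1}(G)+i$ once $i$ generators of order $p^k$ have been assigned. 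To count I would use the surjection $\mu_k\colon A\to A^{p^{k-1}}$, $a\mapsto p^{k-1}a$, whose fibres over the nonzero elements of $A^{p^{k-1}}[p]$ each have size $\abs{A[p^{k-1}]}$ and consist of elements of order exactly $p^k$; the admissible $a$ are precisely those with $\mu_k(a)\in A^{p^{k-1}}[p]\setminus B[p]$, giving $\abs{A[p^{k-1}]}\,(p^{r_k(A)}-p^{r_{k+1}(G)+i})$ choices, a number depending only on the ranks.

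Finally I would assemble the total as $\prod_{k=1}^{e}\prod_{i=0}^{\tilde r_k(G)-1}\abs{A[p^{k-1}]}\,(p^{r_k(A)}-p^{r_{k+1}(G)+i})$ and reorganise it into $f_G(t(A))$. Using $\abs{A[p^{k-1}]}=p^{\sum_{i=1}^{k-1}r_i(A)}$ (which follows from $r_i(A)$ being the number of cyclic factors of $A$ of order $\ge p^i$) together with the identity $\sum_{k}\tilde r_k(G)\sum_{i<k}r_i(A)=\sum_i r_i(A)\,r_{i+1}(G)$, the accumulated powers of $p$ collect exactly into $\prod_k p^{r_k(A)r_{k+1}(G)}$, matching the rewritten form of $f_G(t(A))$. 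This proves (a), and with the opening remark also (b).
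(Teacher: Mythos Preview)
Your proof is correct. The paper itself does not prove this lemma; it simply cites Delsarte \cite{De} for the origin of the formula and refers to \cite{La1}, Lemma~A.1 and Remark~A.3, for a proof. Your argument---extending a monomorphism one basis element at a time in decreasing order of generator, using the socle inclusion $B[p]\subseteq A^{p^{k-1}}[p]$ to see that the number of admissible images depends only on the ranks, and then collapsing the accumulated $|A[p^{k-1}]|$ factors via the Abel-summation identity $\sum_k \tilde r_k(G)\sum_{i<k} r_i(A)=\sum_i r_i(A)\,r_{i+1}(G)$---is exactly the classical approach going back to Delsarte and is almost certainly what the cited references do. Two cosmetic remarks: you write $a:=\phi(g)$ once where $\psi(g)$ is meant, and in the final paragraph you reuse the letter $i$ both as the inner product index and inside the expression for $|A[p^{k-1}]|$; neither affects the mathematics.
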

The formula goes back to works of Delsarte \cite{De}. A proof can be found in \cite{La1}, Lemma A.1 and Remark A.3., where we use $ \tilde r_k(G) = r_k(G) - r_{k+1}(G)$.

\begin{rem}\label{la:d-function} 
We get another formula which is useful for asymptotic considerations:
\begin{equation}\label{eq:fG-2} \abs{\Inj(G,A)}  = \prodd_{k=1}^e p^{r_k(A)  r_k(G)} \prodd_{j= 0}^{\tilde r_k(G) -1} \klmm{ 1- \frac{p^{r_{k+1}(G) + j}}{p^{r_k(A)}}}.
\end{equation}
 \end{rem}
 \begin{proof}
In \eqref{eq:fG-1}, we use $t_k - p^j = t_k(1-\frac{p^j}{t_k})$  and make an index shift to obtain \eqref{eq:fG-2} by plugging in
$t_k=p^{r_k(A)}$.
   \end{proof}

We want to apply these formulas to the norm groups whose $p^k$-ranks involve ceiling operations. 
In the following we denote by $\prest{a}{b}:=\frac{a}{b}-\ptrunc{a}{b}$<1.
Therefore we need the following lemma:

\begin{dfn} 
For a finite abelian $p$-group $G$ of exponent $p^e$ and $n \in \en$ we define 
\begin{equation}\label{la:trunc}
\delta(n,k):= \prest{n}{p^{k}} - \prest{n}{p^{k-1}}
=\ptrunc{n}{p^{k-1}} - \ptrunc{n}{p^k}  - \frac{(p-1)n}{p^k}, \text{ and}
\end{equation}
\begin{equation}\label{def_delta}
\alpha_p(G) := \sum\limits_{k=1}^{e} \frac{ p-1}{p^k} {r}_k(G) \mbox{ and }
\delta_G(n) := -\alpha_p(G) + \summ_{k=1}^e r_k(G) \klmm{  \delta(n-1,k)}.
\end{equation}
\end{dfn}
We immediately see that $\delta(n,k)$ is $p^k$-periodic and therefore $\delta_G(n)$ is $p^e$-periodic.

 \begin{rem}\label{delta-G}
Let $G$ and $H$ be finite abelian $p$-groups of exponent $\leq p^e$. Then:
 \begin{enumb}
 \item $\delta_G(n)= -\alpha_p(G)+ \summ_{k=1}^e \tilde r_k(G) \prest{n-1}{p^k} $ and $\delta_{G\times H}(n) = \delta_G(n) + \delta_H(n)$, 
\item $\alpha_p(G) = \sum\limits_{k=1}^e \tilde r_k(G) \frac{p^k-1}{p^k}$ and $\alpha_p(G\times H) = \alpha_p(G) + \alpha_p(H)$,
 \item $\delta_G(1) = -\alpha_p(G) \leq \delta_G (n) \leq 0 = \delta_G(0)$.
 \end{enumb}
\end{rem} 
\begin{proof} 
	We use an index shift and $r_{e+1}(G)=0$:
$$\sum_{k=1}^e \tilde r_k(G) \frac{p^k-1}{p^k}=\sum_{k=1}^e  (r_k(G)-r_{k+1}(G)) \frac{p^k-1}{p^k}
=\sum_{k=1}^e r_k(G) (\frac{p^k-1}{p^k} - \frac{p^{k-1}-1}{p^{k-1}})
$$
$$=\sum_{k=1}^e r_k(G) \frac{p-1}{p^k}=\alpha_p(G).$$

Similarly, we get:
\begin{align*}
&\sum_{k=1}^e \tilde r_k(G)  \prest{n-1}{p^k} = \sum_{k=1}^e (r_k(G) - r_{k+1}(G))\prest{n-1}{p^k}\\
=&\sum_{k=1}^e  r_k(G) ( \prest{n-1}{p^k}-\prest{n-1}{p^{k-1}})
=\sum_{k=1}^e  r_k(G) \delta(n-1,k) = \alpha_p(G)+\delta_G(n).
\end{align*}

    With $\tilde r_k(G\times H) = \tilde r_k(G) + \tilde r_k(H)$ for $k\geq 1$ this completes (a) and (b).\\ 
Finally  $-\alpha_p(G) =\delta_G(1)\leq -\alpha_p(G) + \summ_{k=1}^e \tilde r_k(G) \prest{n-1}{p^k} = \delta_G(n) \leq 0=\delta_G(0)$.  
\end{proof}

\begin{exm}
	Let $r\in \en$. \begin{enumb}
		\item If $G= (C_p)^r$, then $ \alpha_p(G) = r \cdot \frac{p-1}{p}.$
		\item If $G=C_{p^r}$ is cyclic, then $ \alpha_p(G) = \sum\limits_{k=1}^r \frac{ p-1}{p^k} = \frac{p^r -1}{p^r} .$
	\end{enumb}
	
\end{exm}

\section{Proof of the Main Theorem} 
In this section we prove Theorem \ref{mainthm}.
We prepare some formulas. 

\begin{rem}\label{exp-zerlegung} Let $n\in\en$. Then:
	$$\prod_{k=1}^e p^{r_k(G) r_k(X_n)} = |G| q^{n\alpha_p(G)} q^{\delta_G(n)}.$$
\end{rem}	
	\begin{proof} For all $k= 1,\ldots , e$ we have
	\begin{equation} \label{eq:rkXn}
	r_k(X_n)  \overset{\text{Lem } \ref{la:Xm-ranks}}{=} 1 + f\klmm{ \ptrunc{n-1}{p^{k-1}} - \ptrunc{n-1}{p^k} }
	\overset{ \eqref{la:trunc} }{=}  1+ f \frac{p-1}{p^{k}} (n-1) + f\delta(n-1,k). 
	\end{equation}
	We get:
	\begin{align*}
	\sum_{k=1}^e r_k(G) r_k(X_n) &\overset{\eqref{eq:rkXn}}{=}\sum_{k=1}^e r_k(G) + f \sum_{k=1}^e r_k(G) \frac{p-1}{p^k} (n-1) + f\sum_{k=1}^e r_k(G) \delta(n-1,k)  \\
	&= \log_p(G) + f n \alpha_p(G) + f \delta_G(n).
	\end{align*}
	Note that $q=p^f$.
	\end{proof}


\begin{thm}\  \label{th:main}
 Let $G$ be a finite abelian $p$-group with exponent $\exp(G)=p^{e}$. Let  
 $ \alpha_p(G)$
 and  $\delta_G(n)$ as defined in \eqref{def_delta} where $\delta_G(\cdot)$ is $p^e$-periodic. Let
 $F=\mathbb{F}_q((t))$  and 
   \begin{equation}\label{eq:eps}
    \varepsilon(G,q,n) := \prodd_{k=1}^e \prodd_{j=0}^{\tilde r_k(G)-1} \klmm{ 1- \frac{p^{r_{k+1} (G) + j-1}}{q^{(p-1)(n-1)/p^k+\delta(n-1,k)}} }.
\end{equation}
 Then we have:
 \begin{enumb}
\item 
$Z(F,G;n) =   \frac{  \abs{G}}{\abs{\Aut(G)}} q^{n \alpha_p(G)} q^{\delta_G(n)} \varepsilon(G,q,n).$ 

\item  
$Z(F,G;n) \sim \frac{\abs{G}}{\abs{\Aut(G)}} q^{n \alpha_p(G)} q^{\delta_G(n)}$ and $\lim_{n\to \infty} \varepsilon(G,q,n) = 1.$
\item For  $x_n=n\cdot p^{e}$, i.e. $x_n\equiv  0 \mod  p^e$ we have  
$Z(F,G;x_n) \sim \frac{|G|}{\abs{\Aut(G)}} q^{x_n \alpha_p(G)} .$
\end{enumb}
\end{thm}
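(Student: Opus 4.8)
The plan is to reduce everything to the subgroup-counting description from Section~2 and then apply the injection formula of Lemma~\ref{la:injaut-formula}. By class field theory the quantity $Z(F,G;n)$ equals the number of subgroups of $X_n$ isomorphic to $G$, which is $\alpha_G(X_n)$; using \eqref{lem:injap} this is $\abs{\Inj(G,X_n)}/\abs{\Aut(G)}$. Since $G$ is a $p$-group, Lemma~\ref{lem:pstrich} lets me replace $X_n$ by its $p$-part, whose $p^k$-ranks are exactly the $r_k(X_n)$ computed in Lemma~\ref{la:Xm-ranks}(d). So the heart of part~(a) is to plug $t(A) = (p^{r_1(X_n)},\dots,p^{r_e(X_n)})$ into the asymptotic form \eqref{eq:fG-2} of $\abs{\Inj(G,A)}$ from Remark~\ref{la:d-function}.

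First I would take the formula
\[
\abs{\Inj(G,X_n)} = \prod_{k=1}^e p^{r_k(X_n) r_k(G)} \prod_{j=0}^{\tilde r_k(G)-1}\klmm{1 - \frac{p^{r_{k+1}(G)+j}}{p^{r_k(X_n)}}}
\]
and split it into the two displayed product blocks. The first block $\prod_{k} p^{r_k(G) r_k(X_n)}$ is handled verbatim by Remark~\ref{exp-zerlegung}, which already shows it equals $\abs{G}\, q^{n\alpha_p(G)} q^{\delta_G(n)}$. Dividing by $\abs{\Aut(G)}$ turns the $\abs{G}$ into the prefactor $\abs{G}/\abs{\Aut(G)}$ and leaves the second product untouched. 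For that second block I would substitute the expression \eqref{eq:rkXn} for $r_k(X_n)$ into the denominator exponent $p^{r_k(X_n)}$: writing $r_k(X_n) = 1 + f\bigl((p-1)(n-1)/p^k + \delta(n-1,k)\bigr)$ and using $q = p^f$, the term $p^{r_{k+1}(G)+j}/p^{r_k(X_n)}$ becomes exactly $p^{r_{k+1}(G)+j-1}/q^{(p-1)(n-1)/p^k + \delta(n-1,k)}$, which is the $(k,j)$ factor of $\varepsilon(G,q,n)$ in \eqref{eq:eps}. This establishes (a).

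For part~(b) I would read off the behaviour of each factor of $\varepsilon(G,q,n)$ as $n\to\infty$. Since $\delta(n-1,k)$ is bounded (it is $p^k$-periodic) and $r_{k+1}(G)+j-1$ is a fixed integer for each factor, the exponent $(p-1)(n-1)/p^k + \delta(n-1,k)$ of $q$ in each denominator tends to $+\infty$; hence every factor tends to $1$ and so does the finite product $\varepsilon(G,q,n)$. Combined with (a) this gives the asymptotic $Z(F,G;n) \sim \tfrac{\abs{G}}{\abs{\Aut(G)}} q^{n\alpha_p(G)} q^{\delta_G(n)}$. Finally, for part~(c) I would specialise to $x_n = n p^e$: then $x_n \equiv 0 \pmod{p^e}$, so $\delta_G(x_n) = \delta_G(0) = 0$ by the $p^e$-periodicity and the value $\delta_G(0)=0$ noted in Remark~\ref{delta-G}(c). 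Thus the oscillating factor $q^{\delta_G(x_n)}$ disappears and (b) collapses to $Z(F,G;x_n) \sim \tfrac{\abs{G}}{\abs{\Aut(G)}} q^{x_n\alpha_p(G)}$.

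The only genuinely delicate point is the bookkeeping in the exponent manipulation of the second product: one must track the index shift already built into \eqref{eq:fG-2} (the $r_{k+1}(G)+j$ versus $r_k(G)$) and verify that the $-1$ in the numerator of \eqref{eq:eps} comes precisely from the additive constant $1$ in $r_k(X_n) = r_k(U_n)+1$. Everything else is an application of results established earlier in the excerpt; there is no new structural idea required beyond assembling Lemma~\ref{la:Xm-ranks}, Remark~\ref{la:d-function}, and Remark~\ref{exp-zerlegung}.
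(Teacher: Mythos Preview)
Your proposal is correct and follows essentially the same route as the paper's proof: reduce $Z(F,G;n)$ to $\abs{\Inj(G,X_n)}/\abs{\Aut(G)}$, apply \eqref{eq:fG-2}, use Remark~\ref{exp-zerlegung} for the leading product and \eqref{eq:rkXn} to identify the remaining factors with $\varepsilon(G,q,n)$, then read off (b) and (c) from boundedness of $\delta(n-1,k)$ and $\delta_G(0)=0$. The only difference is that you make the passage to the $p$-part of $X_n$ via Lemma~\ref{lem:pstrich} explicit, whereas the paper applies \eqref{eq:fG-2} directly (the $p^k$-ranks of $X_n$ and of its $p$-Sylow coincide, so this is purely cosmetic).
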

\begin{proof} 
\begin{align*} 
  Z(F,G;n) &= \alpha_G(X_n) \overset{ \eqref{lem:injap}}{=} \frac{\abs{\Inj(G,X_n)}}{\abs{\Aut(G)}} \\ 
    & \overset{\eqref{eq:fG-2}}{=} \frac{1}{\abs{\Aut(G)}}\prod_{k=1}^{e} p^{r_k(G) r_k(X_n)} \prod_{j=0}^{\tilde r_k(G) -1} \klmm{ 1- \frac{p^{r_{k+1} (G) + j}}{p^{r_k(X_n)}} }   \\
& 
{\overset{\text{Rem.} \ref{exp-zerlegung}}{=}} \frac{1}{\abs{\Aut(G)}}|G| q^{n\alpha_p(G)} q^{\delta_G(n)} \prod_{k=1}^{e} \prod_{j=0}^{\tilde r_k(G) -1} \klmm{ 1- \frac{p^{r_{k+1} (G) + j}}{p^{r_k(X_n)}} } \\
 &\overset{\eqref{eq:rkXn},\eqref{eq:eps}}{=} \frac{\abs{G} q^{\delta_G(n)} }{\abs{\Aut(G)}} q^{n \alpha_p(G)} \varepsilon(G,q,n) .
\end{align*}

Using  $|\delta(n-1,k)|<1$ we get $\lim\limits_{n\to \infty} \varepsilon(G,q,n) = 1$ for all $k\geq 1$ and we proved (b). 
Using Remark \ref{delta-G} (c) we see $\delta_G(0) = 0$ which gives (c).   \qedhere
\end{proof}



 \begin{exm}\label{ex:Cp}  
 \begin{enumb}
 \item
Let $G= (C_p)^r$ be elementary abelian. Then 
$\alpha_p(G) = r \frac{p-1}{p}$ 
 and $$ \delta_G(n)  = -\alpha_p(G) + r\cdot\prest{n-1}{p} = \begin{cases} 
                                                                                                                   0, & p\mid n  \\
                                                                                                                   r(\prest{n}{p} -1), & p \nmid n.
                                                                                                                   \end{cases}$$  Hence, if $p$ does not divide $n$ we get
$$ {Z(F,G;n)}  = \frac{\abs{G}  } { \abs{\Aut(G)} } q^{ n \alpha_p(G) } q^{r(\prest{n}{p}-1)}\prod_{j=0}^{r-1} 
\klmm{ 1 - \frac{p^{j-1}}{q^{{(p-1)(n-1)/p +\prest{n-1}{p}} }} }.$$
\item Let $G= C_{p^r}$ be cyclic. Then $\alpha_p(G) = \frac{p^r-1}{p^r}$
and 
$$ \delta_G(n) = -\alpha_p(G) + \prest{n-1}{p^r}  = \begin{cases} 
                                                                                   0, & p^r \mid n\\
                                                                                   \prest{n}{p^r} -1, & p^r \nmid n.
                                                                                   \end{cases}$$ 
  Hence, if $p^r$ does not divide $n$ we get 
 $$ Z(F,G;n)  = \frac{\abs{G}  }{ \abs{\Aut(G)} } q^{  n\alpha_p(G) }  q^{\prest{n}{p^r} -1} 
\klmm{ 1 - \frac{p^{-1}}{q^{ (p-1)(n-1)/p^r + \prest{n-1}{p^r}-\prest{n-1}{p^{r-1}}  } }  }.$$
\end{enumb}

 \end{exm}



\section{Arbitrary finite abelian groups}

We now consider an arbitrary abelian group $G$ and fix a prime number $\ell \in \pe$ with $p \neq \ell$. 
We denote by $G_\ell$  the $\ell$-Sylow subgroup of $G$.

The task is to count the number of open subgroups $U\leq F^\times $ such that $ F^\times / U \cong G_\ell.$
Then the extension given by $U$ is at most tamely ramified, so the conductor exponent is $\leq 1$ and as such
$1+\pp \leq U$. 
Hence we can consider $G_\ell$ as a quotient of $\zet \times \mathbb{F}_{q}^\times \cong \zet \times C_{q-1}.$
Obviously, the only possible quotients isomorphic to $\ell$-groups are groups of the form $ C_{\ell^a} \times C_{\ell^b},$
where $a \geq b$ and $\ell^b\mid (q-1)$. In the following remark we only consider situations, where $G$ is a subgroup of $A$ and
we assume that the exponent of $A$ equals the exponent of $G$.

\begin{rem}\label{rem:lgroups}\ \\
Let  $G = C_{\ell^a} \times C_{\ell^b}$ and $A= C_{\ell^a}\times C_{\ell^d}$ with $a\geq d \geq b$.
\begin{enumb}
\item If $a=b$ or $d=0$, then $ \alpha_G(A) = \alpha_G(G)= 1.$
\item If $a > b$ then $$\alpha_G(A) =\begin{cases} 
                   \ell^{d - b}, & a>d, \\
                   (\ell+1) \ell^{d-b-1} , & a = d.
                   \end{cases}$$
\end{enumb}
\end{rem}
\begin{proof} We write $r_i (A) := \rk_{\ell^i}(A) $ and $r_i(G) := \rk_{\ell^i}(G)$ in this proof.
\begin{enumb} \item If $a=b$ or $d=0$, then we get $G=A$ and therefore $\alpha_G(A) = \alpha_G(G) = 1.$
\item By \eqref{lem:injap} and Lemma~\ref{la:injaut-formula}(a) we have
    \begin{align*} \alpha_{G}(A) & = \frac{ \prod\limits_{k=1}^{a}  \ell^{r_{k+1}(G)r_{k}(A)} \prod\limits_{j=0}^{ \tilde r_{k}(G) -1} (\ell^{r_{k}(A)} - \ell^{r_{k+1}(G) +j}  )}{ 
    \prod\limits_{k=1}^{a}  \ell^{r_{k+1}(G)r_{k}(G)} \prod\limits_{j=0}^{ \tilde r_{k}(G) -1} (\ell^{r_{k}(G)} - \ell^{r_{k+1}(G) +j} )}    \\
    &= \frac{(\ell^{r_{b}(A) } - \ell) (\ell^{r_{a}(A) } -1)}{(\ell^2-\ell)(\ell -1)} \prod_{k=1}^{a}  \ell^{r_{k+1}(G)(r_{k}(A) - r_{k}(G))}  .
    \end{align*}   
   As $r_{k}(G) = r_{k}(A)$ for $k\leq b$ we have $\ell^{r_{b}(G)} -\ell = \ell^2-\ell$. Moreover:
    \begin{align*} \prod_{k=1}^{a}  \ell^{r_{k+1}(G)(r_{k}(A) - r_{k}(G))}  
    &= \prod_{k=b+1}^{a-1}   \ell^{1 \cdot (r_{k}(A)-1)}  = \ell^{\min(a-1,d)-b} \mbox{ and }\end{align*}
    \begin{equation*}  \frac{\ell^{r_{a}(A) } -1}{\ell -1}
= \begin{cases} \ell+1, &   r_{a}(A) = 2 \iff d=a \\
1, & r_{a}(A) =1 \iff b \leq d <a.                                \end{cases} \qedhere\end{equation*}
\end{enumb}
\end{proof}

Note in the following theorem that $X_1 = \zet /\exp(G) \zet \times \zet /(q-1)\zet$. We still use
the notation $G_{p'}$ for the prime to $p$-part of $G$.
\begin{thm}\label{thm18}
Let $G$ be a finite abelian group and $F =\mathbb{F}_q((t))$ with $q=p^f$.
\begin{enumb}
\item $G$ is realisable as a Galois group over $F$ if and only if
  $G_\ell^{q-1}$ is cyclic for all prime numbers $\ell\nmid p$.
\item If $G$ is realisable then for all $n\geq 1$ we have
$$ Z(F,G;n) = 
 Z(F,G_p; n) \cdot \prod_{\ell \mid (q-1)} \alpha_{G_\ell}(C_{\abs{G}}\times C_{q-1}) \leq \frac{(q-1)q}{2} Z(F,G_p; n). $$
\end{enumb}
\end{thm}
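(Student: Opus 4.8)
The plan is to establish part (a) first (the realisability criterion) and then use the multiplicativity machinery from Lemma~\ref{lem:pstrich} together with Remark~\ref{rem:lgroups} to prove the exact formula in part (b), finishing with the numerical bound.

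For part (a), I would argue as follows. By class field theory $G$ is realisable over $F$ if and only if there is an open finite-index subgroup $U\leq F^\times$ with $F^\times/U\cong G$, equivalently a surjection $F^\times\twoheadrightarrow G$. Since $F^\times\cong\zet\times\mathbb{F}_q^\times\times(1+\pp)$ and $G=G_p\times G_{p'}$, a surjection exists iff each $\ell$-component admits one. The $p$-part $G_p$ is always realisable (this is what Theorem~\ref{mainthm} already asserts, and $1+\pp$ is a free $\zet_p$-module of infinite rank). For $\ell\neq p$, the prime-to-$p$ quotients of $F^\times$ factor through the tame quotient $\zet\times\mathbb{F}_q^\times\cong\zet\times C_{q-1}$, as noted in the text just before the theorem. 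Hence $G_\ell$ must be a quotient of $\zet\times C_{q-1}$, and I would observe that the $\ell$-part of $\zet\times C_{q-1}$ is $\zet_\ell\times C_{\ell^{v_\ell(q-1)}}$, whose quotients are exactly the groups $C_{\ell^a}\times C_{\ell^b}$ with $a\geq b$ and $b\leq v_\ell(q-1)$. The condition ``$C_{\ell^b}$ with $\ell^b\mid q-1$'' is precisely the statement that the second invariant factor of $G_\ell$ divides $q-1$, which is equivalent to $G_\ell^{q-1}$ being cyclic. So realisability holds iff $G_\ell^{q-1}$ is cyclic for every $\ell\nmid p$.

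For the exact formula in part (b), I would combine the decomposition of the counting problem over primes with the computation $\alpha_{G_\ell}(A)$ from Remark~\ref{rem:lgroups}. By the reduction in Section~2, $Z(F,G;n)=\alpha_G(X_n)$, and by Lemma~\ref{lem:pstrich} this factors as a product over all primes $\ell$ of $\alpha_{G_\ell}((X_n)_\ell)$. The $p$-factor is $Z(F,G_p;n)$. For each tame prime $\ell\mid(q-1)$, the relevant target group is $(X_1)_\ell$, since the extension is tamely ramified and so $1+\pp\leq U$ forces us to count inside $X_1=\zet/\exp(G)\zet\times C_{q-1}$; here the $\ell$-part is exactly of the shape $C_{\ell^a}\times C_{\ell^d}$ treated in Remark~\ref{rem:lgroups}, with $A=C_{\abs{G}}\times C_{q-1}$ (the first factor contributing the large cyclic $\ell$-part $C_{\ell^a}$ with $a=v_\ell(\abs{G})$ and the second the $C_{\ell^d}$ with $d=v_\ell(q-1)$). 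Primes $\ell\nmid q-1$ and $\ell\neq p$ contribute the trivial factor $1$ by Remark~\ref{rem:lgroups}(a) (since then $d=0$), so they drop out, which is why the product is only over $\ell\mid(q-1)$. This gives exactly $Z(F,G;n)=Z(F,G_p;n)\cdot\prod_{\ell\mid(q-1)}\alpha_{G_\ell}(C_{\abs G}\times C_{q-1})$.

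It remains to prove the bound $\prod_{\ell\mid(q-1)}\alpha_{G_\ell}(C_{\abs G}\times C_{q-1})\leq\tfrac{(q-1)q}{2}$. Using Remark~\ref{rem:lgroups}(b) each local factor is at most $(\ell+1)\ell^{d-b-1}\leq(\ell+1)\ell^{d-1}$ with $d=v_\ell(q-1)$, and I would bound the whole product by noting $\prod_{\ell\mid(q-1)}\ell^{v_\ell(q-1)}=q-1$ while $\prod_{\ell}\tfrac{\ell+1}{\ell}\leq\tfrac{q}{q-1}\cdot\tfrac{q-1}{\,?\,}$; more carefully, since $\prod_{\ell\mid(q-1)}(\ell+1)\ell^{d-1}=(q-1)\prod_{\ell\mid(q-1)}\tfrac{\ell+1}{\ell}$ and the worst case is governed by how the ratio $\prod(\ell+1)/\ell$ compares to $\tfrac{q}{2(q-1)}\cdot$, one reduces to a direct elementary estimate. \textbf{The main obstacle} I expect is precisely this last inequality: turning the product of local maxima into the clean closed-form bound $\tfrac{(q-1)q}{2}$ requires carefully tracking which $\ell$ achieve the $a=d$ case versus the $a>d$ case and verifying that the extremal configuration (forcing the factor $\tfrac{\ell+1}{2}$ at the prime $\ell=2$, or accumulating the $(\ell+1)$ factors) does not overshoot $\tfrac{(q-1)q}{2}$; the rest of the argument is bookkeeping in class field theory and a direct appeal to the already-proven Remark~\ref{rem:lgroups}.
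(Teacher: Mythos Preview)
Your approach is essentially identical to the paper's: factor $Z(F,G;n)=\alpha_G(X_n)$ over primes via Lemma~\ref{lem:pstrich}, observe that for $\ell\neq p$ only $(X_1)_\ell$ matters (since $X_n/X_1$ is a $p$-group), read off part (a) from the shape of the tame quotient, and invoke Remark~\ref{rem:lgroups} for the individual factors. Your reduction of the bound to
\[
\prod_{\ell\mid(q-1)}\alpha_{G_\ell}(C_{|G|}\times C_{q-1})\ \leq\ \prod_{\ell\mid(q-1)}(\ell+1)\,\ell^{v_\ell(q-1)-1}\ =\ (q-1)\prod_{\ell\mid(q-1)}\frac{\ell+1}{\ell}
\]
is exactly what the paper does too.

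The only thing you are missing is the final elementary step, which is much simpler than you fear: every prime divisor $\ell$ of $q-1$ is a distinct integer in $\{2,\dots,q-1\}$, so
\[
\prod_{\ell\mid(q-1)}\frac{\ell+1}{\ell}\ \leq\ \prod_{k=2}^{q-1}\frac{k+1}{k}\ =\ \frac{q}{2},
\]
a telescoping product. There is no need to analyse which primes achieve the $a=d$ case; this crude bound already gives $(q-1)\cdot\frac{q}{2}$, as required.
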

\begin{proof}
  We use Lemma
  \ref{lem:pstrich}:$$ Z(F,G;n) = \alpha_{G}(X_n) = \prod_{\ell \in
    \pe} \alpha_{G_\ell}(X_n) =\alpha_{G_p}(X_n) \cdot \prod_{p \neq
    \ell \in \pe} \alpha_{G_\ell}(X_1) .$$ For the last equation we
  use that $\ell \neq p$ and the fact that $X_n/X_1$ is a $p$-group.
  If $G$ is realisable we get for $\ell\neq p$ that $G_\ell$ is a
  quotient of $\zet \times \zet/(q-1)\zet$ and therefore
  $G_\ell^{q-1}$ has to be cyclic. Note that for $\ell\nmid p(q-1)$ we get by Remark \ref{rem:lgroups}
  that $\alpha_{G_\ell}(X_1)=1$.
It remains to show the estimate in (b). We have
\[ \prod_{\ell \mid (q-1) } \alpha_{G_\ell}(X_1) \leq \prod_{\ell \mid (q-1)} \ell^{\nu_\ell(q-1)-1} (\ell+1)  = (q-1)\prod_{\ell \mid (q-1)} \frac{\ell+1}{\ell}\] \[ \leq (q-1) \prod_{k=2}^{q-1} \frac{k+1}{k}  = (q-1) \frac{q}{2}.  \qedhere \]
\end{proof}
Note that the last estimate can be easily improved.

\section{Application to count by discriminants}

The asymptotic behaviour weighted by conductor gives  interesting insights to the counting problem weighted by discriminant. Let $G$ be a finite group and 
$$D(F,G;n) = \abs{ \{ E/F  \ : \  \Gal(E/F ) \cong G, \norm{D(E/F)} \leq q^n \} }$$
be the counting function of local function field extensions with Galois group $G$ and bounded discriminant.
We define for abelian $p$-groups $G$ of exponent $p^e$:
$$ \beta_p(G) := \frac{ \alpha_p(G)}{ \rho(G)}, \qquad\text{where } \rho( G) := \sum_{k=0}^{e-1}\frac{1}{p^k} \klmm{ \abs{G^{p^{k}}} - \abs{G^{p^{k+1}}}} .$$
We use the local version of the conductor-discriminant theorem, see  \cite[Thm. 7.15]{Iw}.
\begin{thm}\label{thm:FDF}
  Let $K'/K$ be a finite abelian extension of local fields,
  then $$D(K'/K) = \prod_{\chi} \mathfrak F(\chi),$$ where the
  product is taken over all characters $\chi$ of $\Gal(K'/K)$ and $\mathfrak F (\chi) = q^{c(\Ker(\chi))}$.
\end{thm}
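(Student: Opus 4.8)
The statement is the local conductor--discriminant formula, so after taking absolute norms it amounts to the additive identity
$$ v_K(\mathfrak d_{K'/K}) = \sum_\chi c(\Ker(\chi)), $$
the sum running over all characters $\chi$ of $G := \Gal(K'/K)$, where $v_K$ denotes the normalised valuation on $K$ and $\mathfrak d_{K'/K}$ the discriminant. The plan is to read both sides off the theory of Artin conductors and then exploit the decomposition of the regular representation of $G$. Let $G_0 \supseteq G_1 \supseteq \cdots$ be the ramification groups in lower numbering, so $e := |G_0|$ is the ramification index and $f$ the residue degree, with $|G| = ef$.

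First I would attach to each complex representation $\rho$ of $G$ on a space $V$ its Artin conductor exponent
$$ a(\rho) = \sum_{i\geq 0} \frac{|G_i|}{|G_0|}\bigl(\dim V - \dim V^{G_i}\bigr), $$
and record two elementary facts: the defect $\dim V - \dim V^{G_i}$ is additive on direct sums, hence so is $a$; and for a one-dimensional $\chi$ one has $\dim V - \dim V^{G_i} = 1$ precisely when $G_i \not\subseteq \Ker(\chi)$. The second ingredient I would invoke is the compatibility of local class field theory with the ramification filtration: under the reciprocity isomorphism $K^\times / \oper{N}_{K'/K}(K'^\times) \cong G$ the filtration by higher unit groups $1+\pp^i$ matches the upper-numbering ramification filtration, which yields $a(\chi) = c(\Ker(\chi))$ for every character $\chi$. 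I regard this identification as the heart of the matter and the main obstacle, since it rests on the Hasse--Arf theorem guaranteeing integrality of the upper-numbering jumps; the remaining steps are formal.

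Granting this, I would compute the conductor of the regular representation $\oper{Reg}_G$ directly. Since the fixed space of $\oper{Reg}_G$ under a subgroup $H$ has dimension equal to the number of $H$-orbits on $G$, we have $\dim(\oper{Reg}_G)^{G_i} = |G|/|G_i|$, so the defect at level $i$ is $|G|\bigl(1 - |G_i|^{-1}\bigr)$ and therefore
$$ a(\oper{Reg}_G) = \sum_{i\geq 0} \frac{|G_i|}{e}\,|G|\Bigl(1 - \frac{1}{|G_i|}\Bigr) = f\sum_{i\geq 0}\bigl(|G_i|-1\bigr) = f\cdot v_{K'}(\mathfrak D_{K'/K}) = v_K(\mathfrak d_{K'/K}), $$
where the middle equality is the classical expression for the different $\mathfrak D_{K'/K}$ in terms of lower ramification groups and the last uses $\mathfrak d_{K'/K} = \oper{N}_{K'/K}(\mathfrak D_{K'/K})$ together with $v_K \circ \oper{N}_{K'/K} = f\,v_{K'}$.

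Finally I would combine the pieces. As $G$ is abelian every irreducible representation is one-dimensional, so $\oper{Reg}_G \cong \bigoplus_\chi \chi$ with $\chi$ ranging over all characters of $G$. Additivity of the conductor then gives
$$ v_K(\mathfrak d_{K'/K}) = a(\oper{Reg}_G) = \sum_\chi a(\chi) = \sum_\chi c(\Ker(\chi)), $$
and exponentiating base $q$ recovers the stated product formula $D(K'/K) = \prod_\chi \mathfrak F(\chi)$. The only genuinely hard input is the ramification-theoretic compatibility $a(\chi) = c(\Ker(\chi))$; everything else is bookkeeping with the conductor formula, the different, and the decomposition of $\oper{Reg}_G$.
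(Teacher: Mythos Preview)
Your argument is correct and is precisely the classical derivation of the local conductor--discriminant formula via Artin conductors: compute $a(\oper{Reg}_G)$ directly from the ramification filtration, identify it with $v_K(\mathfrak d_{K'/K})$ through the different, then decompose $\oper{Reg}_G$ into characters and invoke $a(\chi)=c(\Ker(\chi))$. You have also correctly flagged that this last identification is the only substantive step, resting on Hasse--Arf and the compatibility of the reciprocity map with the upper-numbering filtration.

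The paper, however, does not prove this theorem at all: it simply quotes it as \cite[Thm.~7.15]{Iw} and uses it as a black box in the proof of Theorem~\ref{thm:lower}. So there is nothing to compare your approach against; you have supplied a self-contained proof where the paper was content with a citation.
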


 The idea of the proof of Theorem~\ref{thm:lower} is contained in \cite[Ch. 2]{La2}.
\begin{thm}\label{thm:lower}
Let $F=\mathbb{F}_q((t))$, $G$ be a finite abelian $p$-group and $n \in \en$.
\begin{enumb}
\item Let $E/F$ be a normal extension with Galois group $G$ and $\norm{\mathfrak F (E/F)}=q^n$. Then $$ \norm{ D (E/F)} \leq  \norm{\mathfrak{F}(E/F)}^{\rho(G)} q^{\abs{G}-1}= q^{n \cdot \rho(G) } q^{\abs{G}-1}. $$
\item There exists a constant $\gamma(F,G)>0$ such that 
$$ D(F,G;n) \geq \gamma(F,G) \cdot q^{n\beta_p(G)}.$$
\end{enumb}
 \end{thm}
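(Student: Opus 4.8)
The plan is to prove the two parts separately, with part (a) providing the essential ingredient for the lower bound in part (b). For part (a), I would invoke the conductor-discriminant theorem (Theorem \ref{thm:FDF}): since $\Gal(E/F) \cong G$ is abelian and self-dual, the character group $\widehat{G}$ is isomorphic to $G$, and the discriminant norm factors as $\norm{D(E/F)} = \prod_\chi q^{c(\Ker(\chi))}$. The strategy is to bound each conductor exponent $c(\Ker(\chi))$ by the conductor exponent of the whole extension $E/F$, which equals $n$. The key observation is that the characters partition according to their order: a character $\chi$ with $\ord(\chi) = p^k$ has kernel with quotient $G/\Ker(\chi) \cong C_{p^k}$, and the number of such characters is exactly $\abs{G^{p^{k-1}}} - \abs{G^{p^k}}$. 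I would then argue that the conductor of such a $\chi$ is bounded by roughly $n/p^{e-k}$ times a factor, so that summing $c(\Ker\chi)$ over all $\chi$ and comparing the exponent sum against $n \cdot \rho(G)$ yields the stated inequality, with the additive term $q^{\abs{G}-1}$ absorbing the $\chi$ of small conductor (in particular the trivial character and the boundary contributions where the ceiling roundings add at most $1$ per character, giving at most $\abs{G}-1$ total).

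For part (b), the plan is to turn the upper bound on discriminants from part (a) into a lower bound on the count $D(F,G;n)$. The idea, following \cite[Ch. 2]{La2}, is that every extension counted by conductor up to a suitable bound automatically has bounded discriminant. Concretely, part (a) shows that an extension with $\norm{\mathfrak F(E/F)} = q^m$ satisfies $\norm{D(E/F)} \leq q^{m \rho(G) + \abs{G}-1}$. Therefore, if I set the conductor bound $m$ so that $m \rho(G) + \abs{G} - 1 \leq n$, i.e.\ take $m = \lfloor (n - \abs{G}+1)/\rho(G) \rfloor$, then every one of the $Z(F,G;m)$ extensions counted by conductor contributes to $D(F,G;n)$. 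This gives $D(F,G;n) \geq Z(F,G;m)$, and I would then apply Theorem \ref{th:main}(a) to estimate $Z(F,G;m) \geq c \cdot q^{m \alpha_p(G)}$ for some constant $c>0$ (using that $q^{\delta_G(m)} \varepsilon(G,q,m)$ is bounded away from $0$, by Remark \ref{delta-G}(c) and the limit $\varepsilon \to 1$). Substituting $m \approx n/\rho(G)$ produces the exponent $\frac{\alpha_p(G)}{\rho(G)} = \beta_p(G)$, with all the rounding and additive constants collapsed into the single constant $\gamma(F,G)$.

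The main obstacle I anticipate is the bookkeeping in part (a): correctly bounding $c(\Ker(\chi))$ in terms of $n$ for characters of each order $p^k$ and verifying that the weighted sum matches $\rho(G)$ exactly, rather than some nearby quantity. The definition $\rho(G) = \sum_{k=0}^{e-1} \frac{1}{p^k}(\abs{G^{p^k}} - \abs{G^{p^{k+1}}})$ must emerge precisely from grouping characters by order and weighting by the reciprocal of the relevant conductor-scaling factor; getting the indexing of the exponents $p^k$ versus $p^{k-1}$ right, and confirming that the ceiling-function discrepancies sum to at most $\abs{G}-1$, is where the care is needed. Part (b) is then essentially formal once (a) is in hand, since it only combines the discriminant bound with the already-established conductor count from Theorem \ref{th:main}.
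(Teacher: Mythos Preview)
Your plan for part (b) is correct and matches the paper's argument essentially line for line: set $\tilde n = \lfloor (n-\abs{G}+1)/\rho(G)\rfloor$, use part (a) to get $D(F,G;n) \geq Z(F,G;\tilde n)$, and then invoke Theorem~\ref{th:main}.

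The gap is in part (a). Partitioning characters by their \emph{order} is the wrong decomposition, for two reasons. First, your count is incorrect: the number of characters of order exactly $p^k$ is $\abs{G[p^k]} - \abs{G[p^{k-1}]}$, not $\abs{G^{p^{k-1}}} - \abs{G^{p^k}}$. (Check $G = C_p \times C_{p^2}$: there are $p^2-1$ characters of order $p$, whereas $\abs{G}-\abs{G^p} = p^3 - p$.) Second, and more fundamentally, the order of $\chi$ does not control its conductor in the required direction: $\ord(\chi)=p^k$ only tells you $G^{p^k}\leq \Ker(\chi)$, which says nothing about how deep into the unit filtration $\Ker(\chi)$ reaches. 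A character of small order can still have conductor equal to $n$.

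The paper instead partitions by the largest torsion subgroup contained in the kernel: set $M_k = \{\chi : G[p^{k-1}]\leq \Ker(\chi),\ G[p^k]\not\leq \Ker(\chi)\}$. The point is that by Lemma~\ref{la:Xm-ranks}(c) the image of $1+\pp^{\ceil{n/p^{k-1}}}$ in $G$ lies in $G[p^{k-1}]$, so $G[p^{k-1}]\leq \Ker(\chi)$ forces $c(\Ker(\chi))\leq \ceil{n/p^{k-1}}$. Moreover $\abs{M_k} = \abs{G/G[p^{k-1}]} - \abs{G/G[p^k]} = \abs{G^{p^{k-1}}} - \abs{G^{p^k}}$ via the isomorphism $G/G[p^{j}]\cong G^{p^{j}}$, which is exactly the weight appearing in $\rho(G)$. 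Summing $\ceil{n/p^{k-1}}\abs{M_k}$ and bounding $\ceil{x}\leq x+1$ then gives $n\rho(G) + (\abs{G}-1)$ on the nose. So the quantity $\abs{G^{p^{k-1}}}-\abs{G^{p^k}}$ you wrote down is the right one, but it arises from the torsion-in-kernel filtration, not from the order filtration.
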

\begin{proof}
Let $n$ be the conductor exponent and $U$ be the norm group of $E^\times$. Using $G = F^\times / U$, we have for $k=1,\ldots, e:$
 $$M_k := \{ \chi \text{ character of } G : G[p^{k-1}] \leq \Ker(\chi)  \ \land   G[p^{k}] \not\leq \Ker(\chi) \}.$$
By Lemma \ref{la:Xm-ranks}(c), we have $c(\mathfrak F(\chi)) \leq c(U_n[p^{k-1}]) = \ceil{n/p^{k-1}}$ for all $\chi \in M_k$. \\
Then  we have $ \abs{M_k} = \abs{G/G[p^{k-1}]} - \abs{G/G[p^k]} = \abs{ G^{p^{k-1}}} - \abs{ G^{p^k}}$. 
Moreover, $\summ_{k=1}^{e} \abs{M_k}=\abs{G}-1$ and the $M_k$ are disjoint -- we only miss the trivial character which has trivial conductor.
Thus:
\begin{align*} &\norm{{D(E/F)}} \overset{\text{Thm. }\ref{thm:FDF}}{=} \prod_{\chi \text{ char. of } \Gal(E/F)} \norm{\mathfrak F(\chi) }= \prod_{k=0}^{e-1} \prod_{\chi \in M_k } \norm{\mathfrak F (\chi)} \\
\leq & \prod_{k=1}^{e} \prod_{\chi \in M_k }  q^{\ceil{n/p^{k-1}} } = \prod_{k=1}^{e}  q^{\ceil{n/p^{k-1}} \abs{ M_k }} 
= \prod_{k=1}^{e}  q^{\ceil{n/p^{k-1}}(\abs{G^{p^{k-1}}} - \abs{G^{p^{k}}}) } \\ \leq & \prod_{k=1}^{e} q^{  \klmm{{n/p^{k-1}}+1}(\abs{G^{p^{k-1}}} - \abs{G^{p^{k}}}) } 
= q^{ \summ_{k=0}^{e-1} p^{-k} n (\abs{G^{p^{k}}} - \abs{G^{p^{k+1}}})} q^{\abs{G}-1}  =  q^{n \cdot \rho(G)} q^{\abs{G}-1} .
\end{align*}

Hence $ D(F,G; n\rho(G) +\abs{G}-1) \geq Z(F,G; n).$
We set $ \tilde{n} := \floor{(n-\abs{G}+1)/\rho(G)}$.
By Theorem~\ref{th:main} there exists a constant $C >0$ such that $Z(F,G;\tilde n) \geq C q^{\tilde n\alpha_p (G)}$. Hence in total    
\begin{align*} D(F,G;n) &\geq D(F,G; \tilde n \rho(G)+|G|-1) \overset{(a)}{\geq}  Z(F,G; \tilde n) \\
&\geq C \cdot q^{\tilde n\alpha_p(G)}= C q^{\ptrunc{ n- \abs{G}+1}{{\rho(G)}}\alpha_p(G)} \geq C q^{ \frac{n- \abs{G}+1-\rho(G)}{{\rho(G)}}\alpha_p(G) }\\ &=C  q^{n\beta_p(G)}  q^{(-\abs{G} + 1-\rho(G)) \beta_p(G)} = \tilde{C} q^{n\beta_p(G)}. \qedhere\end{align*}
\end{proof}

Finally, using index shifts we can easily show that the constant $\beta_p(G)$ coincides with the asymptotic exponent given in Satz~2.1 in \cite{La1}
(denoted $a_\pp(G)$ in his work). 

   \bibliographystyle{abbrv}
 \bibliography{mybib} 

\begin{thebibliography}{10}

\bibitem{De}
S.~Delsarte.
\newblock Fonctions de {M}\"{o}bius sur les groupes abeliens finis.
\newblock {\em Ann. of Math. (2)}, 49:600--609, 1948.

\bibitem{EV}
J.~Ellenberg and A.~Venkatesh.
\newblock Counting extensions of function fields with bounded discriminant and
  specified {G}alois group.
\newblock In {\em Geometric Methods in Algebra and Number Theory}, volume 235
  of {\em Progress in Mathematics}, pages 151--168. Birkh\"auser, 2005.

\bibitem{FV}
I.~B. Fesenko and S.~V. Vostokov.
\newblock {\em Local fields and their extensions}, volume 121 of {\em
  Translations of Mathematical Monographs}.
\newblock American Mathematical Society, Providence, RI, second edition, 2002.
\newblock With a foreword by I. R. Shafarevich.

\bibitem{Ha}
H.~Hasse.
\newblock {\em Number theory}, volume 229.
\newblock Springer-Verlag, Berlin-New York, 1980.
\newblock Translated from the third German edition and with a preface by Horst
  G\"{u}nter Zimmer.

\bibitem{Iw}
K.~Iwasawa.
\newblock {\em Local class field theory}.
\newblock Oxford Science Publications. The Clarendon Press, Oxford University
  Press, New York, 1986.
\newblock Oxford Mathematical Monographs.

\bibitem{La1}
T.~Lagemann.
\newblock {\em {Asymptotik wild verzweigter abelscher Funktionenk\"orper}}.
\newblock Dissertationsschrift, Technische Universit\"at Berlin, 2010.
\newblock Logos--Verlag, ISBN 978-3-8325-2710-5.

\bibitem{La2}
T.~Lagemann.
\newblock Distribution of {A}rtin-{S}chreier-{W}itt extensions.
\newblock {\em J. Number Theory}, 148:288--310, 2015.

\bibitem{Le}
A.~Ledet.
\newblock {\em Brauer type embedding problems}, volume~21 of {\em Fields
  Institute Monographs}.
\newblock American Mathematical Society, Providence, RI, 2005.

\bibitem{Mae}
S.~M\"{a}ki.
\newblock The conductor density of abelian number fields.
\newblock {\em J. London Math. Soc. (2)}, 47(1):18--30, 1993.

\bibitem{NSW}
J.~Neukirch, A.~Schmidt, and K.~Wingberg.
\newblock {\em Cohomology of number fields}, volume 323.
\newblock Springer-Verlag, Berlin, second edition, 2008.

\bibitem{Se}
J.-P. Serre.
\newblock Une ``formule de masse'' pour les extensions totalement ramifi\'{e}es
  de degr\'{e} donn\'{e} d'un corps local.
\newblock {\em C. R. Acad. Sci. Paris S\'{e}r. A-B}, 286(22):A1031--A1036,
  1978.

\bibitem{Wr}
D.~Wright.
\newblock Distribution of discriminants of abelian extensions.
\newblock {\em Proc. London Math. Soc.}, 58:17--50, 1989.

\end{thebibliography}
 
 \end{document}